\documentclass[11pt]{amsart}

\usepackage[T1]{fontenc}
\usepackage{lmodern}
\usepackage{amsmath,amssymb,amsthm}
\usepackage{microtype}
\usepackage{tikz}
\usepackage[
  colorlinks=true,
  linkcolor=blue,
  citecolor=blue,
  urlcolor=blue,
  pdfusetitle
]{hyperref}

\newtheorem{theorem}{Theorem}
\newtheorem{lemma}[theorem]{Lemma}

\newcommand{\dcen}{\delta_{\mathrm{cen}}}
\newcommand{\Cov}{\operatorname{Cov}}
\newcommand{\Rtwo}{\mathbb R^2}
\newcommand{\Btwo}{B^2}

\title[Breaking the Fourth Wall]
{Breaking the Fourth Wall: The Planar Centroid Banach--Mazur Diameter
Is Strictly Less than \(4\)}

\author[Scott Duke Kominers]{Scott Duke Kominers}
\address{Harvard Business School; Department of Economics and Center of
Mathematical Sciences and Applications, Harvard University; and a16z crypto}
\email{kominers@fas.harvard.edu}

\thanks{I appreciate helpful comments and encouragement from Marek Lassak.
I used LLMs to assist with computations, analysis, and synthesis in the
preparation of this article, especially GPT-5.6~Sol and Claude~Fable~5
(both accessed in part via Poe with the support of Quora, where I am an
advisor). In particular, GPT-5.6~Sol identified an analytic shortcut that
substantially simplified a prior version of the argument. The problem, final
methods, and final written form are my own; and of course any errors remain
my responsibility. This work was conducted while I was visiting the
Technological Innovation, Entrepreneurship, and Strategic Management
(TIES) Group at the MIT Sloan School of Management; I greatly appreciate
their hospitality.}

\begin{document}

\begin{abstract}
We prove that the centroid Banach--Mazur diameter of planar convex
bodies is strictly less than \(4\), (slightly) improving the prior upper
bound \(69/17\approx 4.05882\) due to Lassak. The proof combines a sharp
covariance-ellipse sandwich, whose equality cases in any single direction
occur only for triangles, with a compactness argument: an extremal pair at
distance \(4\) would consist of two triangles, which are linearly equivalent
and hence must in fact be at distance \(1\). The resulting gap below \(4\)
is uniform but nonquantitative.
\end{abstract}

\maketitle

\section{Introduction}

A \emph{convex body} is a compact convex \(K\subseteq\Rtwo\) with
nonempty interior; we write \(c(K)\) for its centroid and
\(K_0=K-c(K)\). The \emph{centroid Banach--Mazur distance} of two
planar convex bodies is
\[
  \dcen(C,D)
  =
  \inf\bigl\{t\ge1:
  C_0\subseteq A D_0\subseteq tC_0
  \text{ for some }A\in \mathrm{GL}(2)\bigr\}.
\]
The distance \(\dcen\) is symmetric, affine invariant in each argument,
and satisfies the multiplicative triangle inequality; each assertion
follows by composing the corresponding linear maps and inclusions after
centering.

The \emph{planar centroid Banach--Mazur diameter} is the supremum of
\(\dcen\) over all pairs of planar convex bodies; our main result implies
that this diameter is strictly less than \(4\).

\begin{theorem}\label{thm:main}
There is an absolute constant \(\varepsilon_0>0\) such that
\[
  \dcen(C,D)\le4-\varepsilon_0
\]
for all planar convex bodies \(C\) and \(D\).
\end{theorem}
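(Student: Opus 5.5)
The plan follows the abstract: first prove that \(\dcen\le4\) for every pair of planar convex bodies, with a precise description of the equality cases, and then use compactness to move strictly below \(4\).

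\emph{Normalization.} For a convex body \(K\) with centroid at the origin, let \(\Sigma_K=\Cov(K)\) be the covariance matrix of the uniform distribution on \(K\). Let \(E_K=\{x: x^{\top}\Sigma_K^{-1}x\le1\}\) be its covariance ellipse. Since \(\dcen\) is affine invariant, I will put every body in isotropic position, \(\Sigma_K=I\), so that \(E_K\) is the unit disk \(\Btwo\).

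\emph{Sandwich inequality.} The key estimate is
\[
  \alpha \Btwo\subseteq K_0\subseteq \beta \Btwo ,\qquad \beta/\alpha=4 \ \text{(after optimal scaling)} .
\]
Equivalently, for every unit vector \(u\),
\[
  \alpha\le h_{K_0}(u)\le\beta .
\]
I would prove both bounds one direction at a time. Fix \(u\) and look at the one-dimensional marginal of the uniform measure on \(K\) in direction \(u\). By Brunn's principle its density is the \(1\)st power of a concave function. The marginal has mean \(0\) and variance \(1\), and its support is \([-h_{K_0}(-u),\,h_{K_0}(u)]\). The extremal problem is: among \(1/1\)-concave densities with mean \(0\) and variance \(1\), bound the support endpoint above and below. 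Both extremes are linear densities, that is, the marginals of triangles with a vertex or an edge orthogonal to \(u\). This gives the sharp constants and shows that equality in a single direction forces the marginal to be linear. Two facts are standard here: a linear marginal forces \(K\) to be a triangle (the chord length is linear and vanishes at one end), and a triangle has \(h_{\max}/h_{\min}=2\sqrt2/\sqrt2\cdot\ldots\). The constants are to be computed, and the claim to verify is that their ratio is \(2\) per body. Then, for isotropic \(C\) and \(D\),
\[
  C_0\subseteq \beta\Btwo,\quad \alpha\Btwo\subseteq D_0 .
\]
Scaling gives \(C_0\subseteq \lambda D_0\subseteq 4C_0\), with each body contributing a factor of \(2\), so \(\dcen\le4\).

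\emph{Compactness.} Suppose \(\dcen(C_n,D_n)\to4\). Put all bodies in isotropic position. By the sandwich, they lie in the compact family of convex bodies squeezed between \(\alpha\Btwo\) and \(\beta\Btwo\), so after passing to a subsequence \(C_n\to C\) and \(D_n\to D\) in the Hausdorff metric. Centroids, covariances and \(\dcen\) are continuous along such sequences; for \(\dcen\) this holds because the optimizing \(A\) ranges over a compact set once the bodies are isotropic. Hence \(\dcen(C,D)=4\). In the limit, equality must hold throughout the chain. That means some direction \(u\) in which \(h_{C_0}(u)=\beta\) and some direction \(v\) in which \(h_{D_0}(v)=\alpha\), once the chain is run with the optimal \(A\). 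By the equality analysis, \(C\) and \(D\) are both triangles. But any two triangles are affinely equivalent, and the affine map sends centroid to centroid, so \(\dcen(C,D)=1<4\). This contradiction shows that \(\sup\dcen<4\).

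\emph{Main obstacle.} The delicate step is extracting equality in a single direction from \(\dcen(C,D)=4\). The factor \(4\) appears only if the ellipse inclusions are tight in some direction for each body. I need this to hold for the optimal \(A\), not merely for the isotropic normalization, because the best \(A\) need not be orthogonal. So I would first show that if \(C_0\subseteq AD_0\subseteq 4C_0\) is optimal, one can assume \(A\) maps \(E_D\) to a scalar multiple of \(E_C\). Failing that, I would run the argument with the specific choice of \(A\) that aligns the covariance ellipses. This \(A\) gives an upper bound of \(4\) with slack unless there are equality directions. So the limit bodies satisfy \(\dcen\le4\) with equality only if that particular chain is tight, which suffices to force both to be triangles.
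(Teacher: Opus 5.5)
Your outline matches the paper's proof: isotropic normalization, a directionwise covariance-ellipse sandwich with factor $2$ per body, triangle rigidity from a single equality direction, compactness of the isotropic class with continuity of $\dcen$, and the contradiction that two centroid-zero triangles are linearly equivalent. The genuine gap is that the step carrying all the weight, the one-dimensional extremal problem, is only asserted. You say that ``both extremes are linear densities'' and that ``the constants are to be computed.'' You also first write $\beta/\alpha=4$, which is wrong: the per-body ratio is $2$, and with $\Sigma_K=I$ the sharp bounds are $\sqrt2\le h_{K_0}(u)\le2\sqrt2$.

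Nothing in the proposal shows the following for a continuous concave $w\ge0$ on $[-a,b]$ with $\int\tau w=0$:
\begin{itemize}
\item $b^2/8\le m_2(w)\le b^2/2$;
\item equality in the upper bound forces $a=2b$ and $w\propto\tau+2b$;
\item equality in the lower bound forces $a=b/2$ and $w\propto b-\tau$.
\end{itemize}
Without the sharp constants there is no $4$, and without the equality cases there is no rigidity.

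The paper proves this as its moment-window theorem. After scaling to $b=1$, it writes $w$ as two endpoint wedges $\lambda(\tau+a)+\mu(1-\tau)$ plus a finite positive superposition of tents $T_p$; this is the Green-kernel representation of a concave function. The signs of the generators' first moments force $1/2\le a\le2$. The quadratics $\varphi(\tau)=\tfrac12-\tau^2-\tfrac a2\tau$ and $\psi(\tau)=\tau^2-\tfrac18+\tfrac a2\tau$ have nonnegative integral against every generator. Using the mean-zero condition, this gives $m_2\le\tfrac12$ and $m_2\ge\tfrac18$. Where these pairings vanish, combined with the mean-zero condition, identifies the equality cases. Citing the known covariance-ellipsoid inclusions would give you the constants. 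But your compactness step needs the direction-by-direction equality characterization, which is exactly what this argument supplies.

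Second, the ``main obstacle'' you flag is not one, and your fallback is precisely what the paper does. Because $\dcen$ is an infimum, any admissible $A$ bounds it, so you never need the optimal map. Put $R(K)=\max_u h_{K_0}(u)$ and use your constants $\alpha=\sqrt2$, $\beta=2\sqrt2$. The choice $A=(R(C)/\alpha)I$ gives
\[
  C_0\subseteq R(C)\Btwo\subseteq\frac{R(C)}{\alpha}D_0
  \subseteq\frac{R(C)R(D)}{\alpha^2}C_0 .
\]
Hence $\dcen(C,D)\le R(C)R(D)/\alpha^2\le4$. If $\dcen(C,D)=4$, then $R(C)=R(D)=\beta$. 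That is equality in the outer support bound in some direction for each body, so both are triangles. In the paper's normalization $E_K=\Btwo$ this is the bound $\dcen(C,D)\le q(C)q(D)$. Drop the proposed reduction to an optimal $A$ that aligns the two covariance ellipses; it is unproved and unnecessary.
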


Gr\"unbaum introduced the centroid-constrained version of the
Banach--Mazur distance and conjectured that its planar diameter is
\(5/2\), attained by a parallelogram and a triangle
\cite[pp.~259--260]{Gr63}. Lassak proved that the distance of that
pair is indeed \(5/2\) \cite{Las24a}, and also established the
general upper bound
\[
  \dcen(C,D)\le\frac{69}{17}\approx 4.05882
\]
\cite{Las24b}. To our knowledge, Theorem~\ref{thm:main} is the first
proof that the diameter is below \(4\).

The proof is driven by a sharp one-body statement. Associated to every
planar convex body with centroid at the origin is a canonical covariance
ellipse whose support distance in each direction is proportional to the
root-mean-square spread of the uniform mass in that direction. With the
normalization used below, the ellipse lies in the body and its double
contains the body. Contact with the doubled ellipse in even one direction
forces the body to be a triangle. After independently normalizing two
bodies so that their covariance ellipses are both the Euclidean unit disk,
chaining these inclusions gives the coarse distance bound \(4\).

The normalized class is compact and \(\dcen\) is continuous on it, so
the diameter is attained. If that diameter were \(4\), equality would
be required in the one-body estimate for both members of a maximizing
pair; both would therefore be triangles. But any two triangles with
centroid at the origin are linearly equivalent and have distance \(1\),
giving the desired contradiction.

This also explains where the improvement over Lassak's bound enters.
Lassak's argument uses affine-regular hexagons supplied by
Besicovitch's theorem \cite{Bes48}, together with the sharp \(4/21\)
bound on the displacement of the centroid from the hexagon center
established in \cite{Las24c}, leading to \(69/17\) \cite{Las24b}. Our
covariance ellipses are built from centroid-centered second moments,
giving the coarse bound \(4=2\cdot2\). Thus the two sharp one-body
factors create the apparent ``fourth wall'' \(2\cdot2=4\); it is
broken not by improving either factor, but because simultaneous equality
forces two affinely-equivalent triangles, which break the hypothesized
equality.

\section{A one-dimensional moment window}

The geometric argument reduces each direction to a nonnegative
concave function with vanishing first moment
(Figure~\ref{fig:tents}(a)). We first isolate the resulting
one-dimensional estimate.

For a nonzero integrable function \(w\ge0\) on a bounded interval,
we write \(m_2(w)=\int \tau^2w(\tau)\,d\tau\big/\!\int w(\tau)\,d\tau.\)

\begin{theorem}[Moment window]\label{thm:moment}
Let \(a,b>0\), and let
\(w:[-a,b]\to[0,\infty)\) be continuous, concave, and nonzero.
If
\[
  \int_{-a}^b \tau w(\tau)\,d\tau=0,
\]
then we have
\[
  \frac{b^2}{8}\le m_2(w)\le\frac{b^2}{2}.
\]
The upper and lower equality cases are, respectively,
\(a=2b,\ w(\tau)\propto\tau+2b\), and
\(a=b/2,\ w(\tau)\propto b-\tau\).
\end{theorem}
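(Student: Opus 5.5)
The plan is to reduce to extremal concave densities by a Choquet-type decomposition and then finish with an explicit finite-dimensional computation. Normalize \(\int w=1\), so \(w\) is a probability density on \([-a,b]\) with mean \(0\) and \(m_2(w)\) is its variance. Both the constraint \(\int\tau w=0\) and \(m_2(w)=\int\tau^2w\) are then linear in \(w\).

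\textbf{Tent decomposition.} For \(c\in[-a,b]\), let \(t_c\) be the normalized tent (triangular density) on \([-a,b]\) with apex at \(c\). The degenerate cases \(c=-a\) and \(c=b\) are the two linear ramps. Every continuous concave \(w\ge0\) on \([-a,b]\) is a mixture \(w=\int t_c\,d\mu(c)\) for a probability measure \(\mu\).
\begin{itemize}
\item To see this, write \(w\) as its linear interpolation between the endpoint values plus a superposition of ``hat'' functions weighted by the measure \(-w''\ge0\).
\item Each hat function \(\min\{(\tau+a)(b-c),(b-\tau)(c+a)\}\) is a multiple of \(t_c\).
\item The linear part is a nonnegative combination of the two ramps \(t_{-a},t_b\).
\end{itemize}
In this representation the mean of \(w\) is \(\int \tfrac{c-a+b}{3}\,d\mu(c)\). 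The set of such \(\mu\) with mean \(0\) has extreme points supported on at most two apexes (one linear constraint on probability measures). Hence the extreme values of the linear functional \(m_2\), for fixed \(a,b\), occur either at a single tent with mean \(0\), or at a two-tent mixture \(\lambda t_{c_1}+(1-\lambda)t_{c_2}\) whose tent means straddle \(0\).

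\textbf{Single tents.} A tent has mean \((c-a+b)/3\), so mean zero forces \(c=a-b\). The condition \(c\in[-a,b]\) then gives \(b/2\le a\le 2b\). This is consistent with the Grünbaum-type fact that the centroid of a concave density lies in the middle third of the interval. The variance of this tent is
\[
\frac{a^2+b^2+c^2+ab+ac-bc}{18}\bigg|_{c=a-b}
=\frac{a^2-ab+b^2}{6}.
\]
Over \(a\in[b/2,2b]\) this expression is convex in \(a\) and takes its maximum \(b^2/2\) at \(a=2b\) and the value \(b^2/8\) at \(a=b/2\). Both agree with the stated equality cases, whose densities are \(\propto \tau+2b\) and \(\propto b-\tau\). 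However, its interior minimum is \(b^2/8\) at \(a=b/2\) only after checking the endpoint; since the minimum over \(a\) of \((a^2-ab+b^2)/6\) on this interval is attained at \(a=b/2\), this must be confirmed.

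\textbf{Two-tent mixtures (main obstacle).} For a mixture, the variance about \(0\) equals
\[
\lambda\bigl(\sigma_1^2+\mu_1^2\bigr)+(1-\lambda)\bigl(\sigma_2^2+\mu_2^2\bigr),
\]
where \(\mu_i=(c_i-a+b)/3\), \(\sigma_i^2\) are the tent variances, and \(\lambda\mu_1+(1-\lambda)\mu_2=0\). The upper bound follows from the global estimate \(m_2\le ab\) together with the constraint \(a\le 2b\), which holds for every admissible \(w\) and not only for tents. Indeed,
\[
m_2 = ab-\int(\tau+a)(b-\tau)w\,d\tau\le ab,
\]
but this yields \(b^2/2\) only when \(a\le b/2\). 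So I will instead maximize the explicit quadratic expression in \((c_1,c_2,\lambda)\) directly and show it never exceeds \(\max\{(a^2-ab+b^2)/6,\ \text{endpoint values}\}\le b^2/2\). The analogous minimization gives the lower bound \(b^2/8\).

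I expect this two-tent optimization, together with tracking \(a\) as a free parameter, to be the main computational obstacle. As a fallback, I would try a direct comparison: replace \(w\) by the linear function \(\ell\) with the same mass and first moment on the same support. Concavity makes \(w-\ell\) change sign at most twice, and a sign-change argument then compares their second moments. This would show that single tents already extremize and would bypass the mixture analysis.
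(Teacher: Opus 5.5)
\textbf{Gap: the two-tent case is missing, and it carries the upper bound.} Your normalized-tent mixture is the same decomposition as the paper's Lemma~\ref{lem:tents}, and your single-tent computation $s(a-b)=(a^2-ab+b^2)/6$ is correct. But the proof stops where the upper bound actually lives, so the two-tent case is not a residual check. For fixed $a<2b$, the maximum of $m_2$ is \emph{not} attained at a single tent. For example, take $a=b$: the constant density (an equal mixture of the two ramps) has $m_2=b^2/3$, while the mean-zero tent gives only $b^2/6$.

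Consequently your single-tent analysis proves neither $m_2\le b^2/2$ nor that the stated equality cases are the only ones. Your shortcut $m_2\le ab$ fails, as you note yourself, and neither the ``quadratic in $(c_1,c_2,\lambda)$'' optimization nor the sign-change fallback is carried out. A smaller point: you verify $b/2\le a\le 2b$ only for single tents. For general $w$ it follows because every tent mean $(c-a+b)/3$ lies in $[(b-2a)/3,(2b-a)/3]$, and this interval must contain $0$; the paper derives it the same way.

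\textbf{The missing observation.} The second moment about $0$ of the normalized tent with apex $c$ is
\[
s(c)=\sigma^2(c)+\bigl(\tfrac{c-a+b}{3}\bigr)^2=\tfrac16\bigl(c^2+(b-a)c+a^2-ab+b^2\bigr),
\]
a quadratic in $c$ with leading coefficient $1/6$. The mean constraint says exactly $\int c\,d\mu=a-b$. Hence, with no extreme-point reduction at all,
\[
m_2(w)=\int s\,d\mu=s(a-b)+\tfrac16\operatorname{Var}_\mu(c),
\qquad
0\le\operatorname{Var}_\mu(c)\le(2a-b)(2b-a).
\]
The upper bound on the variance is the standard bound for a measure on $[-a,b]$ with mean $a-b$. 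This yields
\[
\frac{b^2}{8}+\frac{(a-b/2)^2}{6}\le m_2(w)\le\frac{b^2}{2}-\frac{(a-2b)^2}{6}.
\]
Equality on the left forces $a=b/2$ and $\mu=\delta_{a-b}=\delta_{-a}$. Equality on the right forces $a=2b$, and then the mean $b$ is the right endpoint, so $\mu=\delta_b$. These are exactly the two stated equality cases.

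\textbf{Comparison with the paper.} With this step added, your route becomes a genuinely different, primal proof. It also gives the exact range of $m_2$ for each fixed $a$, since both bounds are attained. The paper argues dually instead. Taking $b=1$, it pairs the decomposition with the quadratic certificates $\varphi(\tau)=\tfrac12-\tau^2-\tfrac a2\tau$ and $\psi(\tau)=\tau^2-\tfrac18+\tfrac a2\tau$, and checks that each is nonnegative on every generator. That needs only three explicit integrals per bound and no tent variances or mixture analysis.
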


We use the following standard decomposition.

\begin{lemma}[Tent decomposition]\label{lem:tents}
Let \(a>0\). Every continuous nonnegative concave function \(w\) on
\([-a,1]\) can be written
\[
  w(\tau)
  =
  \lambda(\tau+a)+\mu(1-\tau)
  +\int_{(-a,1)}T_p(\tau)\,d\nu(p),
  \qquad \lambda,\mu\ge0,
\]
where \(\nu\) is a finite positive Borel measure and
\[
  T_p(\tau)
  =
  \min\left\{
    \frac{\tau+a}{p+a},
    \frac{1-\tau}{1-p}
  \right\},
  \qquad -a<p<1.
\]
\end{lemma}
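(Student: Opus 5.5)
The tent functions \(T_p\) are exactly the extreme rays, beyond the two endpoint-vanishing linear functions, of the cone of nonnegative concave functions on \([-a,1]\). I would prove the lemma through the second distributional derivative, i.e.\ a Choquet-type representation made explicit. Since \(w\) is concave and continuous on \([-a,1]\), its right derivative \(w'_+\) exists on \([-a,1)\) and is nonincreasing. Its values may equal \(+\infty\) at \(-a\) and \(-\infty\) near \(1\), but they are finite on the open interval. Hence \(\sigma:=-w''\), the distributional second derivative with sign reversed, is a positive Radon measure on \((-a,1)\). The candidate is \(d\nu(p)=\phi(p)\,d\sigma(p)\), where \(\phi\) is the normalizing weight that makes \(T_p\) match the Green's function of \(-d^2/d\tau^2\) on \([-a,1]\) with Dirichlet conditions.

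First I would record that the Green's function is
\[
G(\tau,p)=\frac{(\min(\tau,p)+a)(1-\max(\tau,p))}{1+a}.
\]
Comparing with \(T_p\), the peak value is \(T_p(p)=1\) while \(G(p,p)=(p+a)(1-p)/(1+a)\). So
\[
G(\tau,p)=\frac{(p+a)(1-p)}{1+a}\,T_p(\tau),
\]
and I set \(\phi(p)=(p+a)(1-p)/(1+a)\). Next I would let \(\ell\) be the affine function agreeing with \(w\) at \(-a\) and \(1\). Writing
\[
\ell(\tau)=\frac{w(1)}{1+a}(\tau+a)+\frac{w(-a)}{1+a}(1-\tau)
\]
gives \(\lambda=w(1)/(1+a)\ge0\) and \(\mu=w(-a)/(1+a)\ge0\), using \(w\ge0\). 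The remaining claim is that \(u:=w-\ell\) equals \(\int G(\cdot,p)\,d\sigma(p)\). Both sides vanish at the endpoints and have the same distributional second derivative \(-\sigma\) on the open interval, so they differ by an affine function vanishing at both endpoints, which is zero. On a compact subinterval this is standard; the endpoints need the care described below.

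The main obstacle is finiteness of \(\nu\), together with justifying the identity up to the endpoints, since \(\sigma\) itself may be infinite near \(\pm\) endpoints (for example \(w=\sqrt{\tau+a}\)). I would handle both by exhausting. Apply the formula on \([-a+\delta,1-\delta]\), where \(w\) is Lipschitz, and pass to the limit by monotone convergence. The truncated Green's functions increase to \(G\) pointwise and the restricted measures increase to \(\sigma\), while continuity of \(w\) gives convergence of the affine parts. This yields \(u(\tau)=\int G(\tau,p)\,d\sigma(p)\) for each interior \(\tau\), possibly first as an inequality via Fatou and then as equality by monotonicity. For finiteness, evaluate at a single interior point, say \(\tau_0=(1-a)/2\). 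There \(G(\tau_0,p)\ge c\,\phi(p)\) for a constant \(c>0\) depending on \(a\), because both sides vanish linearly at the endpoints. Hence
\[
\nu((-a,1))=\int\phi\,d\sigma\le c^{-1}u(\tau_0)<\infty.
\]

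Assembling these gives \(w=\lambda(\tau+a)+\mu(1-\tau)+\int T_p\,d\nu(p)\) with all the required positivity.
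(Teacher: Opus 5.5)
Your proof is correct. Its skeleton is the paper's: the same affine part with \(\lambda=w(1)/(1+a)\) and \(\mu=w(-a)/(1+a)\), the same curvature measure \(\sigma\), the same weight \(\gamma(p)=(p+a)(1-p)/(1+a)\), and the same kernel identity \(G(\tau,p)=\gamma(p)T_p(\tau)\). Where you differ is in the order of the two technical steps and the tools used for them.

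The paper proves finiteness of \(\nu\) first. It integrates by parts to get the exact identity \(\nu((-a,1))=\frac{2}{a+1}\int_{-a}^1 g\), which requires the boundary estimates \((\tau+a)|g'_\pm(\tau)|\to0\) and \((1-\tau)|g'_\pm(\tau)|\to0\). It then checks \(-D^2H=\sigma\) for \(H=\int G\,d\sigma\) by Fubini against test functions. Finally, it uses finiteness of \(\nu\) together with \(T_p\le1\) to get continuity of \(H\) up to the endpoints.

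You instead establish the representation pointwise first. You combine the compact-interval formula, monotonicity of the Dirichlet Green function in the domain, and monotone convergence. This already gives equality directly, so the Fatou detour is unnecessary. You then read off finiteness from a single evaluation. At \(\tau_0=(1-a)/2\) one has \(T_p(\tau_0)\ge\tfrac12\) for every \(p\), so \(c=\tfrac12\) works regardless of \(a\), and \(\nu((-a,1))\le 2u(\tau_0)\).

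Your route avoids all boundary-term analysis and any distributional computation on the full interval. The cost is that you get only an upper bound on the total mass rather than the paper's exact formula. Since the later pairing argument only needs \(\nu\) to be finite, either suffices.
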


The name is literal: the graph of \(T_p\) rises linearly from \(0\) at
\(-a\) to height \(1\) at \(p\), then descends linearly to \(0\) at
\(1\), so it has the shape of a tent with peak \(p\); see
Figure~\ref{fig:tents}(b). The two affine terms are the one-sided
wedges determined by the endpoint values of \(w\). Thus, after removing
those wedges, every concave profile is a nonnegative superposition of
tents; the form we present makes the equality analysis transparent.

\begin{figure}[!ht]
\centering
\begin{tikzpicture}[
  x=.88cm,
  y=.88cm,
  >=stealth,
  every node/.style={font=\scriptsize},
  profile/.style={blue!65!black,thick}
]
\begin{scope}[xshift=1.55cm,yshift=1.08cm]
  \fill[gray!12]
    (-1.50,0)--(-.80,.75)--(.70,.90)--
    (1.50,0)--(.80,-.75)--(-.70,-.90)--cycle;
  \draw[thick]
    (-1.50,0)--(-.80,.75)--(.70,.90)--
    (1.50,0)--(.80,-.75)--(-.70,-.90)--cycle;
  \draw[->,gray] (-1.68,0)--(1.64,0) node[right] {\(\tau\)};
  \fill (0,0) circle (.75pt);
  \node[below left] at (0,0) {\(0\)};
  \draw[<->,profile] (.20,-.81)--(.20,.85)
    node[midway,right,fill=gray!12,inner sep=.7pt,text=black]
    {\(w(\tau)\)};
  \node[below] at (-1.50,0) {\(-a\)};
  \node[below] at (1.50,0) {\(b\)};
  \node at (-.50,.39) {\(K\)};
  \draw[->] (.72,1.10)--(1.27,1.10) node[right] {\(u\)};
\end{scope}

\draw[->,thick] (3.25,1.08)--(3.78,1.08);

\begin{scope}[xshift=5.12cm,yshift=.82cm]
  \draw[->,gray] (-1.26,0)--(1.36,0) node[right] {\(\tau\)};
  \draw[->,gray] (0,-.07)--(0,1.28) node[above] {\(w\)};
  \draw[profile]
    (-1.12,0)--(-.60,1.00)--(-.52,1.08)--
    (.52,1.08)--(.60,1.00)--(1.12,0);
  \node[below] at (-1.12,0) {\(-a\)};
  \node[below] at (1.12,0) {\(b\)};
  \node[blue!65!black] at (.45,.88) {\textnormal{concave}};
\end{scope}
\node at (3.15,-.47)
  {\textnormal{(a) Perpendicular chords form \(w\)}};

\begin{scope}[xshift=9.55cm,yshift=.82cm]
  \draw[->,gray] (-1.72,0)--(1.76,0) node[right] {\(\tau\)};
  \fill[blue!8] (-1.50,0)--(.18,1.12)--(1.50,0)--cycle;
  \draw[profile] (-1.50,0)--(.18,1.12)--(1.50,0);
  \draw[densely dotted] (.18,0)--(.18,1.12);
  \node[below] at (-1.50,0) {\(-a\)};
  \node[below] at (.18,0) {\(p\)};
  \node[below] at (1.50,0) {\(1\)};
  \node[above right] at (.18,1.12) {\(T_p(p)=1\)};
\end{scope}
\node at (9.55,-.47)
  {\textnormal{(b) The generator \(T_p\) is a tent}};
\end{tikzpicture}
\caption{(a)~The chord reduction: for fixed \(u\), the section
\(\{x:\langle x,u\rangle=\tau\}\) is perpendicular to \(u\), and its
length is \(w(\tau)\); Brunn's theorem makes \(w\) concave.
(b)~The tent generator, drawn with the normalization \(b=1\): the
tent \(T_p\) vanishes at \(-a\) and \(1\) and has height \(1\) at
\(p\). The two endpoint wedges together with a positive
superposition of tents reconstruct \(w\) (Lemma~\ref{lem:tents}).}
\label{fig:tents}
\end{figure}

\begin{proof}[Proof of Lemma~\ref{lem:tents}]
Subtract the affine chord joining the endpoint values:
\[
  \lambda=\frac{w(1)}{a+1},
  \qquad
  \mu=\frac{w(-a)}{a+1},
  \qquad
  g(\tau)=w(\tau)-\lambda(\tau+a)-\mu(1-\tau).
\]
Then \(g\) is nonnegative and concave, and vanishes at both endpoints.
Its distributional curvature
\(\sigma=-D^2g\) is a positive locally finite measure on \((-a,1)\).
Put
\[
  \gamma(p)=\frac{(p+a)(1-p)}{a+1},
  \qquad
  d\nu(p)=\gamma(p)\,d\sigma(p).
\]
Although \(\sigma\) need not be finite, \(\nu\) is. Indeed,
\(\gamma''=-2/(a+1)\), so distributional integration by parts gives
\[
  \nu((-a,1))
  =
  \int_{(-a,1)}\gamma\,d\sigma
  =
  \frac{2}{a+1}\int_{-a}^1g(\tau)\,d\tau
  <\infty.
\]
To justify the integration by parts, we may first use cutoffs a
positive distance from the endpoints. Concavity and the endpoint
values of \(g\) imply
\[
  (\tau+a)|g'_\pm(\tau)|\longrightarrow0
  \quad(\tau\downarrow-a),
  \qquad
  (1-\tau)|g'_\pm(\tau)|\longrightarrow0
  \quad(\tau\uparrow1),
\]
where \(g'_\pm\) denote the one-sided derivatives. Moreover,
\(g(\tau)\to0\). Since \(\gamma\) vanishes linearly and \(\gamma'\)
is bounded, all boundary terms tend to \(0\).

The Dirichlet Green kernel for \(-d^2/d\tau^2\) on \([-a,1]\) is
\[
  G(\tau,p)
  =
  \begin{cases}
    \dfrac{(1-p)(\tau+a)}{a+1},&\tau\le p,\\[5pt]
    \dfrac{(p+a)(1-\tau)}{a+1},&\tau\ge p,
  \end{cases}
  \ =\gamma(p)T_p(\tau).
\]
The function
\[
  H(\tau)
  =
  \int_{(-a,1)}G(\tau,p)\,d\sigma(p)
  =
  \int_{(-a,1)}T_p(\tau)\,d\nu(p)
\]
is finite and continuous by dominated convergence, and vanishes at
the two endpoints by construction. Testing against a compactly
supported smooth function and using Fubini's theorem gives
\(-D^2H=\sigma=-D^2g\). Hence \(g-H\) is affine and vanishes at both
endpoints, so \(g=H\), as required.
\end{proof}

Analytically, Lemma~\ref{lem:tents} is the one-dimensional
Green-kernel representation of a concave function in terms of its
distributional curvature. Its use below under a linear moment
constraint is closely related to geometric localization and the
reduction to affine one-dimensional densities \cite{KLS95,FG04}.

\begin{proof}[Proof of Theorem~\ref{thm:moment}]
By scaling if necessary, we may take \(b=1\).
We use the decomposition in Lemma~\ref{lem:tents}. The first moments
of its three types of generators are
\begin{align}
  \int_{-a}^1\tau(\tau+a)\,d\tau
    &=\frac{(2-a)(a+1)^2}{6},
    \label{eq:first-left}\\
  \int_{-a}^1\tau(1-\tau)\,d\tau
    &=-\frac{(2a-1)(a+1)^2}{6},
    \label{eq:first-right}\\
  \int_{-a}^1\tau T_p(\tau)\,d\tau
    &=\frac{(a+1)(1+p-a)}{6}.
\end{align}
If \(a>2\), then all three quantities are negative; if \(a<1/2\),
all three are positive. Since the coefficients and the measure in
the decomposition are nonnegative and not all zero, the mean-zero
condition \(\int \tau w(\tau)\,d\tau=0\) forces
\[
  \frac12\le a\le2.
\]

For functions whose product is integrable, write
\[
  \mathcal J(f,g)
  =
  \int_{-a}^1f(\tau)g(\tau)\,d\tau.
\]
The decomposition may be paired term by term with the polynomials
below because \(\nu\) (as given in Lemma~\ref{lem:tents}) is finite
and \(0\le T_p\le1\).

For the upper bound, set
\[
  \varphi(\tau)=\frac12-\tau^2-\frac a2\tau.
\]
Direct integration gives
\begin{align*}
  \mathcal J(\varphi,\tau+a)&=0,\\
  \mathcal J(\varphi,1-\tau)
    &=\frac{(2-a)(a+1)^3}{12},\\
  \mathcal J(\varphi,T_p)
    &=\frac{(a+1)(1-p)(p+2)}{12}.
\end{align*}
These quantities are nonnegative for
\(1/2\le a\le2\) and \(-a<p<1\). Hence
\[
  0\le\mathcal J(\varphi,w)
  =
  \frac12\int_{-a}^1w
  -
  \int_{-a}^1\tau^2w,
\]
where the first moment of \(w\) has been used. This proves that
\(m_2(w)\le1/2\).

For the lower bound, set
\[
  \psi(\tau)=\tau^2-\frac18+\frac a2\tau.
\]
The corresponding pairings are
\begin{align*}
  \mathcal J(\psi,\tau+a)
    &=\frac{3(a+1)^2}{16},\\
  \mathcal J(\psi,1-\tau)
    &=\frac{(a+1)^2(2a-1)^2}{48},\\
  \mathcal J(\psi,T_p)
    &=\frac{(a+1)(2p+1)^2}{48};
\end{align*}
these are all nonnegative, and therefore
\[
  0\le\mathcal J(\psi,w)
  =
  \int_{-a}^1\tau^2w
  -
  \frac18\int_{-a}^1w.
\]
Thus we see that \(m_2(w)\ge1/8\).

It remains to inspect equality. Equality in the upper bound eliminates
the tent measure, since its pairing is strictly positive for every
\(-a<p<1\). If \(a<2\), then equality also eliminates the
\((1-\tau)\)-component, after which the mean-zero condition eliminates
the remaining component. Hence \(a=2\); equations
\eqref{eq:first-left}--\eqref{eq:first-right} then force
\(\mu=0\), so \(w\) is proportional to \(\tau+2\).

Equality in the lower bound forces \(\lambda=0\), forces \(\mu=0\)
unless \(a=1/2\), and permits tent mass only at \(p=-1/2\). If
\(a>1/2\), this tent has strictly negative first moment, so the
mean-zero condition forces \(\nu=0\), contradicting \(w\ne0\).
Hence \(a=1/2\). The point \(p=-1/2\) is then an excluded endpoint,
so \(\nu=0\), and \(w\) is proportional to \(1-\tau\). Rescaling
from \(b=1\) proves the theorem.
\end{proof}

\section{The covariance ellipse and its equality cases}

Let \(\Btwo\) denote the Euclidean unit disk and let
\(S^1=\partial\Btwo\). The support function of a compact convex set
\(K\) is
\[
  h_K(u)=\max_{x\in K}\langle x,u\rangle
  \qquad(u\in S^1).
\]

For a convex body \(K\), put
\[
  \Cov_K
  =
  \frac1{|K|}
  \int_K
  (x-c(K))(x-c(K))^{\!\top}\,dx,
\]
where \(|K|\) denotes planar area. Since \(K\) has nonempty interior,
\(\Cov_K\) is positive definite. For \(u\in S^1\), the scalar
\(u^{\!\top}\Cov_Ku\) is the mean squared displacement in direction
\(u\) of a uniformly random point of \(K\). Define the
\emph{covariance ellipse}, with the planar normalization used here, by
\[
  E_K=\sqrt2\,\Cov_K^{1/2}\Btwo,
\]
so its support distance in direction \(u\) is
\(\sqrt{2\,u^{\!\top}\Cov_Ku}\), or \(\sqrt2\) times the corresponding
root-mean-square spread. Up to normalization, \(E_K\) is also sometimes
called the \emph{inertia ellipse}. Here \(\Cov_K^{1/2}\) is the unique
positive-definite square root; we use fractional powers for matrix
square roots and radicals for scalars.

For \(A\in\mathrm{GL}(2)\) and \(z\in\Rtwo\),
\[
  \Cov_{AK+z}=A\Cov_KA^{\!\top},
  \qquad
  E_{AK+z}=AE_K,
\]
where the second equality is equality of sets: both sides are centered
ellipses with shape matrix \(2A\Cov_KA^{\!\top}\). Thus the
construction is translation invariant and linearly equivariant.

For a convex body \(K\) with \(c(K)=0\) and \(u\in S^1\), let
\(w_{K,u}(\tau)\) be the length of the chord
\[
  K\cap\{x:\langle x,u\rangle=\tau\},
  \qquad
  -h_K(-u)\le\tau\le h_K(u),
\]
as in Figure~\ref{fig:tents}(a).

\begin{theorem}[Covariance-ellipse sandwich]\label{thm:sandwich}
Let \(K\subset\Rtwo\) be a convex body with \(c(K)=0\). Then, for
every \(u\in S^1\),
\[
  \sqrt{2\,u^{\!\top}\Cov_Ku}
  \le h_K(u)
  \le\sqrt{8\,u^{\!\top}\Cov_Ku}.
\]
Equivalently,
\[
  E_K\subseteq K\subseteq2E_K.
\]
Equality in the lower support bound occurs exactly when
\(w_{K,u}(\tau)\propto\tau+2h_K(u)\) on
\([-2h_K(u),h_K(u)]\). Equality in the upper support bound occurs
exactly when \(w_{K,u}(\tau)\propto h_K(u)-\tau\) on
\([-h_K(u)/2,h_K(u)]\).

Either equality in one direction forces \(K\) to be a triangle.
Conversely, the two orientations normal to each side of a triangle
realize the two cases.
\end{theorem}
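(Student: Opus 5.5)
The plan is to reduce everything to Theorem~\ref{thm:moment} via the chord function. Fix \(u\in S^1\) and write \(b=h_K(u)\), \(a=h_K(-u)\); both are positive since \(c(K)=0\) lies in the interior of \(K\). By Fubini, \(|K|=\int_{-a}^b w_{K,u}\), and \(\int_{-a}^b\tau w_{K,u}(\tau)\,d\tau=|K|\langle c(K),u\rangle=0\). Likewise
\[
  u^{\!\top}\Cov_Ku=\frac1{|K|}\int_K\langle x,u\rangle^2\,dx=m_2(w_{K,u}).
\]
By Brunn's theorem (in the plane this is just convexity: a chord length is a concave function of the offset), \(w_{K,u}\) is concave, nonnegative and nonzero on \([-a,b]\). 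It is continuous on the closed interval, possibly after redefining the endpoint values, which does not affect any of the integrals. Theorem~\ref{thm:moment} then gives \(b^2/8\le u^{\!\top}\Cov_Ku\le b^2/2\), which rearranges to
\[
  \sqrt{2\,u^{\!\top}\Cov_Ku}\le h_K(u)\le\sqrt{8\,u^{\!\top}\Cov_Ku}.
\]
Since \(h_{E_K}(u)=\sqrt{2u^{\!\top}\Cov_Ku}\) and \(h_{2E_K}=2h_{E_K}\), these support inequalities holding for all \(u\) are equivalent to \(E_K\subseteq K\subseteq 2E_K\). The equality characterizations transfer directly from the equality cases of Theorem~\ref{thm:moment}. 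Lower support equality \(h_K(u)^2=2m_2\) is \(m_2=b^2/2\), so \(a=2b\) and \(w\propto\tau+2b\). Upper support equality is \(m_2=b^2/8\), so \(a=b/2\) and \(w\propto b-\tau\).

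Next, each equality case forces a triangle. If \(w_{K,u}(\tau)=c(b-\tau)\) on \([-b/2,b]\), then the chord at \(\tau=-b/2\) is a segment \(S\) of length \(3cb/2\), and \(K\) meets the line \(\langle x,u\rangle=b\) in a single point \(v\). By convexity, \(K\supseteq\mathrm{conv}(S\cup\{v\})\), a triangle whose chord function is also \(c(b-\tau)\). Since \(K\) contains this triangle and has the same chord length at every level, the two sets coincide. The case \(w\propto\tau+2b\) is identical, with the segment at \(\tau=-2b\) degenerating to a point and the base chord on the support line \(\tau=b\). The one place I expect to need care is the endpoint regularity: the chord at the extreme level might be degenerate or fail to be the limit of nearby chords. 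I would handle this by using closedness of \(K\) to obtain a segment of length at least \(\lim w\) at the endpoint level, and then arguing containment as above.

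For the converse, take a triangle with centroid \(0\) and let \(u\) be the outward normal to a side, placing that side on the line \(\langle x,u\rangle=b\). The opposite vertex lies at level \(-2b\), because the centroid sits at one third of the height from the base. The chord function is then linear, vanishing at the vertex, so it is \(\propto\tau+2b\); this is the lower equality case. For the direction \(-u\), with \(h_K(-u)=2b\), the profile is the upper equality case after the reflection \(\tau\mapsto-\tau\). The hardest step is therefore only the routine verification that the reduction to \(w_{K,u}\) meets the continuity and concavity hypotheses of Theorem~\ref{thm:moment}, together with the rigidity argument for equality.
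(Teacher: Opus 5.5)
Your proposal is correct and follows the paper's proof essentially step for step: you reduce to the chord function via Fubini and Brunn, apply Theorem~\ref{thm:moment}, use the support-function criterion for \(E_K\subseteq K\subseteq 2E_K\), transfer the equality cases, and check the converse using the normals to a side. The one variation is the rigidity step, where you show \(K=\mathrm{conv}(S\cup\{v\})\) by containment plus equal chord lengths at every level; the paper instead writes \(K\) between a convex lower boundary \(\ell\) and a concave upper boundary \(r\) and observes that \(w=r-\ell\) affine forces both to be affine. Either argument works, and your closedness argument for the endpoint chords does the job of the paper's continuity remark.
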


\begin{proof}
Fix \(u\in S^1\), and put
\[
  b=h_K(u),
  \qquad
  a=h_K(-u).
\]
The centroid lies in the interior of \(K\), so \(a,b>0\). Indeed, if
a supporting linear functional vanished at the centroid, it would be
nonnegative on \(K\) and positive on a set of positive area, contrary
to its having mean zero.

Write \(w=w_{K,u}\). Brunn's theorem makes \(w\) concave. It is
continuous on the closed support interval; at the endpoints this
follows, for example, by writing the upper and lower boundaries as a
concave and a convex function and using the closedness of \(K\).
Fubini's theorem and the definitions of the centroid and covariance
give
\[
  \int_{-a}^b\tau w(\tau)\,d\tau=0,
  \qquad
  m_2(w)=u^{\!\top}\Cov_Ku,
  \qquad
  b=h_K(u).
\]
Thus \(w\) records the one-dimensional area distribution in direction
\(u\), while \(E_K\) records its root-mean-square scale. Substituting
the preceding identities into Theorem~\ref{thm:moment} gives
\[
  \frac{h_K(u)^2}{8}
  \le u^{\!\top}\Cov_Ku
  \le\frac{h_K(u)^2}{2}.
\]
Solving the right-hand inequality for \(h_K(u)\) gives the lower
support bound, while solving the left-hand inequality gives the upper
support bound:
\[
  \sqrt{2\,u^{\!\top}\Cov_Ku}
  \le h_K(u)
  \le\sqrt{8\,u^{\!\top}\Cov_Ku}.
\]
They are \emph{directionwise} inequalities because they compare the
supports separately for each choice of \(u\).

For a linear map \(A\), the image ellipse \(A\Btwo\) has support
function \(h_{A\Btwo}(u)=\lVert A^{\!\top}u\rVert.\) Since
\(\Cov_K^{1/2}\) is symmetric, we see that
\[
  h_{E_K}(u)=\sqrt{2\,u^{\!\top}\Cov_Ku},
  \qquad
  h_{2E_K}(u)=\sqrt{8\,u^{\!\top}\Cov_Ku}.
\]
For compact convex sets \(E,F\), the standard support-function
criterion says that \(E\subseteq F\) exactly when
\(h_E(v)\le h_F(v)\) for every \(v\in S^1\); this follows from their
representations as intersections of supporting half-planes. The
ellipse inclusions follow.

The one-dimensional equality statements follow directly from
Theorem~\ref{thm:moment}. To identify the body attaining equality, we
rotate coordinates so that \(u=(1,0)\), and write
\[
  K
  =
  \{(\tau,z):
    -a\le\tau\le b,\ 
    \ell(\tau)\le z\le r(\tau)\},
\]
where \(\ell\) is convex, \(r\) is concave, and \(w=r-\ell\).
In either equality case, \(w\) is affine and vanishes at one endpoint.
Since \(r=\ell+w\) is concave and \(w\) is affine, \(\ell=r-w\) is
also concave. Thus \(\ell\) is both convex and concave, so both
\(\ell\) and \(r\) are affine. Their graphs meet at the endpoint
where \(w=0\), and \(K\) is the convex hull of that point and the
opposite chord: it is a triangle (see Figure~\ref{fig:mechanism}).
Conversely, the chord profile of a triangle is affine in a direction
normal to any side, and reversing that direction interchanges the two
equality types.
\end{proof}

\begin{figure}[!ht]
\centering
\begin{tikzpicture}[
  x=.86cm,
  y=.86cm,
  >=stealth,
  every node/.style={font=\scriptsize},
  profile/.style={blue!65!black,thick},
  innerellipse/.style={blue!65!black,thick},
  outerellipse/.style={red!70!black,densely dashed,thick}
]
\begin{scope}[xshift=1.35cm,yshift=1.12cm]
  \draw[->,gray] (-1.28,0)--(1.27,0) node[right] {\(\tau\)};
  \draw[profile] (-1.10,0)--(.98,.70);
  \node[below] at (-1.10,0) {\(-2d\)};
  \node[below] at (.98,0) {\(d\)};
  \node[above] at (0,.73)
    {\(u_{\mathrm{side}}:\ w(\tau)\propto\tau+2d\)};
\end{scope}

\begin{scope}[xshift=1.35cm,yshift=-1.12cm]
  \draw[->,gray] (-1.28,0)--(1.27,0) node[right] {\(\tau\)};
  \draw[profile] (-.98,.70)--(1.10,0);
  \node[below] at (-.98,0) {\(-d\)};
  \node[below] at (1.10,0) {\(2d\)};
  \node[above] at (0,.73)
    {\(u_{\mathrm{vert}}:\ w(\tau)\propto2d-\tau\)};
\end{scope}

\draw[->,thick] (3.25,0)--(3.93,0);

\begin{scope}[xshift=5.30cm]
  \fill[gray!12] (-1.20,0)--(.60,-.65)--(.60,.65)--cycle;
  \draw[thick] (-1.20,0)--(.60,-.65)--(.60,.65)--cycle;
  \draw[blue!45] (-.60,-.217)--(-.60,.217);
  \draw[blue!45] (0,-.433)--(0,.433);
  \draw[blue!45] (.40,-.578)--(.40,.578);
  \fill (0,0) circle (.75pt);
  \draw[->,blue!65!black] (0,0)--(.60,0);
  \draw[->,red!70!black] (0,0)--(-1.20,0);
  \node[align=center] at (0,1.08)
    {\(\textnormal{affine \(w\), one zero endpoint}\)\\[-1pt]
     \(\Longrightarrow K=T\)};
\end{scope}

\draw[->,thick] (6.20,0)--(7.48,0)
  node[midway,above] {\textnormal{normalize}};

\begin{scope}[xshift=9.25cm]
  \node at (0,1.48)
    {\(E_T=\Btwo\subseteq T\subseteq2\Btwo=2E_T\)};
  \draw[outerellipse] (0,0) circle (1.10);
  \fill[gray!12]
    (0,1.10)--(-.95263,-.55)--(.95263,-.55)--cycle;
  \draw[thick]
    (0,1.10)--(-.95263,-.55)--(.95263,-.55)--cycle;
  \draw[innerellipse] (0,0) circle (.55);
  \fill (0,0) circle (.75pt);
  \draw[->,red!70!black] (0,.03)--(0,1.10);
  \node[left] at (-.04,.82) {\(u_{\mathrm{vert}}\)};
  \draw[->,blue!65!black] (0,-.03)--(0,-.55);
  \node[right] at (.04,-.37) {\(u_{\mathrm{side}}\)};
  \node[blue!65!black,fill=white,inner sep=.8pt] at (.40,.20)
    {\(E_T\)};
  \node[red!70!black,fill=white,inner sep=.8pt] at (1.00,.62)
    {\(2E_T\)};
  \node at (-.62,-.34) {\(T\)};
\end{scope}

\node at (3.10,-2.00)
  {\textnormal{(a) Equality profiles \(\Longrightarrow\)
  triangular rigidity}};
\node at (9.25,-2.00)
  {\textnormal{(b) Normalized equality case}};
\end{tikzpicture}
\caption{The directionwise equality mechanism. Here \(T\) is a
triangle with centroid \(0\), \(d\) is the support distance from
\(0\) to a fixed side, and \(u_{\mathrm{side}}\) and
\(u_{\mathrm{vert}}=-u_{\mathrm{side}}\) are the unit normals pointing toward
that side and the opposite vertex. (a)~The direction
\(u_{\mathrm{side}}\) gives the increasing equality profile on
\([-2d,d]\), and \(u_{\mathrm{vert}}\) the decreasing one on \([-d,2d]\);
conversely, an affine profile vanishing at one endpoint forces both
boundary graphs to be affine, so \(K\) is a triangle. (b)~After the
normalization \(E_T=\Btwo\), the equality triangle is equilateral:
\(\Btwo\) is its incircle and \(2\Btwo\) its circumcircle, with the
colored arrows ending at the contact points.}
\label{fig:mechanism}
\end{figure}

In dimension \(n\), the optimal outer dilation factor in the
covariance-ellipsoid sandwich is \(n\), with simplices attaining
equality \cite{MP89}; see \cite{BGVV14} for background. Related sharp
trace second-moment inequalities, with simplices as the equality cases
among polytopes, are proved in \cite[Theorem~1.1]{FPS12}. The direct
planar proof above supplies the direction-by-direction equality
analysis needed in our compactness argument.

Figure~\ref{fig:mechanism} illustrates both halves of the equality
analysis. Its second panel uses the fact that every centroid-zero
triangle is a linear image of a centroid-zero equilateral one; by
linear equivariance of the covariance ellipse, the normalization
\(E_T=\Btwo\) therefore makes \(T\) equilateral up to an orthogonal
map.

\section{Compactness and the strict gap}

Let \(I_2\) denote the \(2\times2\) identity matrix. A body is in
\emph{isotropic position} if
\[
  c(K)=0,
  \qquad
  \Cov_K=\frac12I_2.
\]
Equivalently, \(c(K)=0\) and \(E_K=\Btwo\).
Every planar convex body has such an affine normalization, unique up
to an orthogonal map: explicitly, we take
\[
  K
  \longmapsto
  \frac1{\sqrt2}\Cov_K^{-1/2}(K-c(K)).
\]
We let \(\mathcal I\) be the class of bodies in isotropic position.

\begin{lemma}\label{lem:compact}
The class \(\mathcal I\) is compact in the Hausdorff topology, and
\((C,D)\mapsto\dcen(C,D)\) is continuous on
\(\mathcal I\times\mathcal I\).
\end{lemma}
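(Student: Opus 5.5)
Compactness comes first. By Theorem~\ref{thm:sandwich}, every \(K\in\mathcal I\) satisfies \(\Btwo\subseteq K\subseteq2\Btwo\). By the Blaschke selection theorem, every sequence in \(\mathcal I\) therefore has a Hausdorff-convergent subsequence \(K_j\to K\), where \(K\) is compact and convex. Passing to the limit gives \(\Btwo\subseteq K\subseteq 2\Btwo\), so \(K\) has nonempty interior.

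It remains to show that \(c(K)=0\) and \(\Cov_K=\tfrac12I_2\). I would use that the indicator functions \(\mathbf 1_{K_j}\) converge to \(\mathbf 1_K\) almost everywhere: the boundary \(\partial K\) has measure zero, and points of \(\operatorname{int}K\) or of the complement of \(K\) are eventually inside \(K_j\) or outside it respectively. All the sets lie in \(2\Btwo\), so dominated convergence applies and the area, the first moments and the second moments all converge. Hence \(c(K)=0\) and \(\Cov_K=\tfrac12 I_2\), so \(\mathcal I\) is closed and compact.

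For continuity, I would avoid estimating \(\dcen\) directly and instead compare bodies that are close. If \(C,C'\in\mathcal I\) are at Hausdorff distance \(\eta\), then, since both contain \(\Btwo\), we have \(C'\subseteq C+\eta\Btwo\subseteq(1+\eta)C\), and symmetrically \(C\subseteq(1+\eta)C'\). Taking \(A=(1+\eta)I\) in the definition gives \(\dcen(C,C')\le(1+\eta)^2\), or \(1+\eta\) after rescaling appropriately.

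The multiplicative triangle inequality then gives
\[
  \frac{\dcen(C,D)}{(1+\eta)^2(1+\eta')^2}\le\dcen(C',D')\le(1+\eta)^2(1+\eta')^2\,\dcen(C,D)
\]
whenever \(d_H(C,C')\le\eta\) and \(d_H(D,D')\le\eta'\). Hence \(\log\dcen\) is Lipschitz on \(\mathcal I\times\mathcal I\) with respect to the Hausdorff metric, and in particular continuous.

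Two points need checking. First, the triangle inequality is valid here because the centers are the centroids, which are \(0\) for every body in \(\mathcal I\). Second, the infimum defining \(\dcen\) is finite, since \(C_0\subseteq D_0\subseteq 2\Btwo\subseteq 2C_0\) gives \(\dcen\le4\).

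The main obstacle is the convergence of the moments in the compactness step, namely showing that the centroid and covariance conditions survive the Hausdorff limit. The a.e. indicator convergence argument handles this. An alternative is to bound the symmetric-difference area by \(|(K+\eta\Btwo)\setminus(K\ominus\eta\Btwo)|\), which is \(O(\eta)\) uniformly because perimeters are bounded inside \(2\Btwo\).
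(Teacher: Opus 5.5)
Your proposal is correct and follows the paper's proof essentially step for step. Compactness comes from the sandwich $\Btwo\subseteq K\subseteq2\Btwo$, Blaschke selection, and almost-everywhere convergence of indicators with dominated convergence to carry the centroid and covariance conditions to the limit; continuity comes from $\dcen(C,C')\le(1+\eta)^2$ combined with the multiplicative triangle inequality. Two side remarks are slightly off but unused. The inclusions $C\subseteq(1+\eta)C'\subseteq(1+\eta)^2C$ do not justify the parenthetical improvement to $1+\eta$. Your finiteness chain should read $C\subseteq2\Btwo\subseteq2D\subseteq4\Btwo\subseteq4C$, since $C_0\subseteq D_0$ need not hold.
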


\begin{proof}
Under the map above, Theorem~\ref{thm:sandwich} becomes
\[
  \Btwo\subseteq K\subseteq2\Btwo
  \qquad (K\in\mathcal I).
\]
Blaschke's selection theorem therefore gives sequential compactness
of the ambient family. Under Hausdorff convergence inside this fixed
sandwich, the indicator functions converge almost everywhere because
convex-body boundaries have area zero. Dominated convergence then
gives convergence of volumes and of the first and second moments.
Hence the conditions \(c(K)=0\) and
\(\Cov_K=\tfrac12I_2\) are closed, so \(\mathcal I\) is compact.

For continuity, let \(K,L\in\mathcal I\) and suppose
\(d_H(K,L)\le\eta\), where \(d_H\) denotes the Hausdorff distance.
Since \(\Btwo\subseteq K\) and \(\Btwo\subseteq L\), we have
\[
  K\subseteq L+\eta\Btwo\subseteq(1+\eta)L,
  \qquad
  L\subseteq K+\eta\Btwo\subseteq(1+\eta)K.
\]
Taking \(A=(1+\eta)I_2\) in the defining infimum for
\(\dcen(K,L)\) gives
\[
  K\subseteq AL\subseteq(1+\eta)^2K,
\]
and hence
\[
  \dcen(K,L)\le(1+\eta)^2.
\]

Now suppose \(K_j\to K\) and \(L_j\to L\) in the Hausdorff topology,
and put
\[
  \eta_j
  =
  \max\{d_H(K_j,K),d_H(L_j,L)\}.
\]
The multiplicative triangle inequality, applied in both directions,
then gives
\[
  (1+\eta_j)^{-4}\dcen(K,L)
  \le\dcen(K_j,L_j)
  \le(1+\eta_j)^4\dcen(K,L),
\]
which proves continuity on \(\mathcal I\times\mathcal I\).
\end{proof}

\begin{proof}[Proof of Theorem~\ref{thm:main}]
Independent affine normalizations do not change \(\dcen\).
Consequently, the supremum of \(\dcen\) over all pairs equals its
supremum over \(\mathcal I\times\mathcal I\). By
Lemma~\ref{lem:compact}, the latter supremum is attained; call it
\(M\).

For \(K\in\mathcal I\), put
\[
  q(K)=\max_{u\in S^1}h_K(u).
\]
Theorem~\ref{thm:sandwich} gives \(q(K)\le2\). Since
\(h_{\Btwo}\equiv1\), the support-function containment criterion used
in the proof of Theorem~\ref{thm:sandwich} gives
\[
  \Btwo\subseteq K\subseteq q(K)\Btwo.
\]
Therefore, for \(C,D\in\mathcal I\), we have
\[
  C\subseteq q(C)\Btwo\subseteq q(C)D
  \subseteq q(C)q(D)\Btwo\subseteq q(C)q(D)C,
\]
and hence
\[
  \dcen(C,D)\le q(C)q(D)\le4.
\]
Thus \(M\le4\).

Suppose that \(M=4\), and choose \(C,D\in\mathcal I\) with
\(\dcen(C,D)=4\). The preceding inequalities then force
\(q(C)=q(D)=2\). The maxima defining \(q(C)\) and \(q(D)\) are
attained, so the equality statement in Theorem~\ref{thm:sandwich}
implies that both bodies are triangles.

Write \(C=\operatorname{conv}\{c_1,c_2,c_3\}\) and
\(D=\operatorname{conv}\{d_1,d_2,d_3\}\), where
\(c_1+c_2+c_3=d_1+d_2+d_3=0\).
Because the triangles are nondegenerate, \(c_1,c_2\) and \(d_1,d_2\)
are bases of \(\Rtwo\). The linear map taking \(d_i\) to \(c_i\) for
\(i=1,2\) also takes \(d_3=-d_1-d_2\) to \(c_3=-c_1-c_2\), and hence
maps \(D\) onto \(C\). Thus \(\dcen(C,D)=1\), contradicting the
choice of the pair.

Thus \(M<4\). Taking \(\varepsilon_0=4-M\) completes the proof.
\end{proof}

\providecommand{\bysame}{\leavevmode\hbox to3em{\hrulefill}\thinspace}

\end{document}